\newtheorem{prop}{Proposition}
\newtheorem{thm}[prop]{Theorem}
\newtheorem{lem}[prop]{Lemma}
\theoremstyle{definition}
\newtheorem*{rem}{Remark}
\newtheorem*{ack}{Acknowledgements}
\def\co{\colon\thinspace}
\newcommand{\Q}{\mathbb{Q}}
\newcommand{\R}{\mathbb{R}}
\newcommand{\alst}{\alpha_{\mathrm{st}}}
\newcommand{\Rst}{R_{\mathrm{st}}}
\newcommand{\xist}{\xi_{\mathrm{st}}}
\begin{document}

\author[H.~Geiges]{Hansj\"org Geiges}
\address{Mathematisches Institut, Universit\"at zu K\"oln,
Weyertal 86--90, 50931 K\"oln, Germany}
\email{geiges@math.uni-koeln.de}

\author[N.~R\"ottgen]{Nena R\"ottgen}
\address{Mathematisches Institut, Albert-Ludwigs-Universit\"at Freiburg,
Eckerstr.~1, 79104 Freiburg, Germany}
\email{nena.roettgen@math.uni-freiburg.de}

\author[K.~Zehmisch]{Kai Zehmisch}
\address{Mathematisches Institut, Universit\"at zu K\"oln,
Weyertal 86--90, 50931 K\"oln, Germany}
\email{kai.zehmisch@math.uni-koeln.de}

\title[Trapped Reeb orbits]{Trapped Reeb orbits do not imply periodic ones}

\date{}

\begin{abstract}
We construct a contact form on $\R^{2n+1}$, $n\geq 2$, equal to
the standard contact form outside a compact set
and defining the standard contact structure on all
of $\R^{2n+1}$, which has
trapped Reeb orbits, including a torus invariant under the Reeb flow,
but no closed Reeb orbits. This answers a question posed by Helmut Hofer.
\end{abstract}

\subjclass[2010]{37C27, 37C70, 53D10}

\maketitle


\section{Introduction}
In \cite[Theorem~2]{elho94}, Eliashberg and Hofer proved a global version
of the Darboux theorem for contact forms in dimension~$3$:
Any contact form $\alpha$ on $\R^3$ that equals the standard form
\[ \alst=dz+\frac{1}{2}(x\, dy-y\, dx)\]
outside a compact set and whose Reeb vector field does not have
any periodic orbits, is diffeomorphic to the standard form, i.e.\
there is a diffeomorphism $\phi$ of $\R^3$ such that $\phi^*\alpha=\alst$.

Recall that a contact form $\alpha$ on a $(2n+1)$-dimensional manifold
is a $1$-form such that $\alpha\wedge (d\alpha)^n$ is a volume form.
The Reeb vector field of such a contact form is the unique vector
field $R$ satisfying
\[ d\alpha(R,\, .\,)\equiv 0\;\;\;\text{and}\;\;\;\alpha(R)\equiv 1.\]

These defining equations imply that diffeomorphic contact forms
have diffeomorphic Reeb vector fields, so if $\phi^*\alpha=\alst$,
then $T\phi(\Rst)=R$, where $\Rst=\partial_z$ is the
Reeb vector field of~$\alst$. Thus, the Reeb vector field of a contact
form $\alpha$ on $\R^3$ satisfying the assumptions of the
Eliashberg--Hofer theorem does not have any orbits that
are bounded in forward or backward time (we shall call such
orbits `trapped'). Phrased contrapositively:

\begin{thm}[Eliashberg--Hofer]
\label{thm:EH}
Let $\alpha$ be a contact form on $\R^3$ that equals the standard form
$\alst$ outside a compact set. If the Reeb vector field of $\alpha$
has a trapped orbit, then it also has a periodic orbit.\qed
\end{thm}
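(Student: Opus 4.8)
The plan is to establish the contrapositive, which is exactly the global Darboux theorem quoted above: if the Reeb vector field $R$ of $\alpha$ has no periodic orbit, then there is a diffeomorphism $\phi$ of $\R^3$ with $\phi^*\alpha=\alst$. Granting this, such a $\phi$ satisfies $T\phi(\Rst)=R$ with $\Rst=\partial_z$. Every orbit of $\partial_z$ is a vertical line and hence leaves every compact subset of $\R^3$ in both time directions; since a self-diffeomorphism of $\R^3$ is proper, the same is then true of every orbit of $R$. Thus $R$ has no trapped orbit, which is the assertion of the theorem in contrapositive form.

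To prove the global Darboux statement I would fill $\R^3$ by a family of pseudoholomorphic curves, following the technique of Gromov and Hofer. Pass to the symplectisation $\bigl(\R\times\R^3,\ d(e^{t}\alpha)\bigr)$ and choose an almost complex structure $J$ adapted to $\alpha$: translation invariant, with $J\partial_t=R$, preserving $\xi:=\ker\alpha$, and with $J|_\xi$ tamed by $d\alpha$; arrange moreover that $J$ agree with the standard integrable structure attached to $\alst$ outside a compact set. Outside that compact set, where $\alpha=\alst$, the contact manifold $(\R^3,\xist)$ is the complement of a point in the tight three-sphere $(S^3,\xist)=\partial(B^4,\omega_{\mathrm{st}})$, and there is an explicit foliation of this region by $J$-holomorphic discs (equivalently, by the projections of finite-energy curves) out of which the standard Darboux chart can be read off directly. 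The idea is to continue this foliation inward over all of $\R^3$.

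Concretely, I would start a Bishop-type family of embedded $J$-holomorphic discs with boundary in the standard region near infinity and propagate it by the implicit function theorem, using automatic transversality for discs in a four-dimensional symplectisation together with a priori energy and gradient bounds. By the Gromov--Hofer compactness theorem the family can cease to extend only through bubbling; but then some component of the limiting building is a nonconstant finite-energy plane, and by Hofer's theorem such a plane is asymptotic to a periodic orbit of $R$, contradicting the hypothesis. Hence no bubbling occurs, the family is defined for all parameter values, and an exhaustion argument --- using positivity of intersections to keep the discs disjoint and embedded --- shows that it sweeps out all of $\R^3$, each point exactly once, and matches the standard foliation near infinity. The resulting smooth foliation of $\R^3$ by discs, together with the transverse Reeb-type parameter along the associated fibration, then yields coordinates in which $\alpha=\alst$, i.e.\ the sought diffeomorphism~$\phi$.

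The crux, and the step I expect to be the main obstacle, is the compactness and bubbling analysis: establishing Hofer's a priori estimates for finite-energy curves, identifying bubbles with finite-energy planes so that the absence of periodic Reeb orbits forces the moduli space of discs to be compact, and controlling the discs near the noncompact standard end so that they neither drift off to infinity nor fail to glue with the standard picture there. Once these analytic inputs are secured, passing from the holomorphic foliation to an honest Darboux chart is comparatively formal.
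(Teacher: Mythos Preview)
Your first paragraph is exactly the paper's argument: the theorem is stated as the contrapositive of the global Darboux theorem of Eliashberg--Hofer, and the only step supplied is the observation that $\phi^*\alpha=\alst$ forces $T\phi(\Rst)=R$, so that every Reeb orbit is the image of a vertical line and hence untrapped. The paper does not prove the global Darboux theorem itself but simply cites \cite[Theorem~2]{elho94}; your remaining paragraphs go beyond the paper by sketching the holomorphic-disc filling argument behind that citation, which is reasonable in spirit but more than is required here.
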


By taking the connected sum of $(\R^3,\alst)$ with a $3$-sphere
carrying the standard contact form (all of whose Reeb orbits
are closed), one can easily construct a contact form on $\R^3$
that equals $\alst$ outside a compact set but has periodic Reeb orbits
(and hence cannot be diffeomorphic to~$\alst$).

In a talk at the conference on \emph{Recent Progress in Lagrangian
and Hamiltonian Dynamics} (Lyon, 2012) and in personal communication
to Victor Bangert, Helmut Hofer conjectured the higher-dimensional
analogue of Theorem~\ref{thm:EH}, see also~\cite{brho11}.
The purpose of this note is to disprove
that conjecture by an example.

We write
\[ \alst=dz+\frac{1}{2}\sum_{j=1}^n (x_j\, dy_j-y_j\, dx_j)\]
for the standard contact form on $\R^{2n+1}$, and $\xist=\ker\alst$
for the standard contact structure.

\begin{thm}
\label{thm:main}
There is a contact form $\alpha$ on $\R^{2n+1}$, $n\geq 2$,
defining the standard contact structure, i.e.\ $\ker\alpha=\xist$,
with the following properties:
\begin{enumerate}
\item[(i)] The Reeb vector field $R$ of $\alpha$ has a compact
invariant set (and hence orbits bounded in forward and backward time).
\item[(ii)] There are Reeb orbits which are bounded in
forward time and whose $z$-component goes to
$-\infty$ for $t\rightarrow -\infty$.
\item[(iii)] $\alpha$ equals $\alst$ outside a compact set.
\item[(iv)] $R$ does not have any periodic orbits.
\end{enumerate}
\end{thm}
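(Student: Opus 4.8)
The plan is to take $\alpha$ to be a conformal rescaling $\alpha=g\cdot\alst$ of the standard contact form, where $g\co\R^{2n+1}\to\R_{>0}$ is smooth, invariant under the torus action rotating the $n$ complex coordinate planes, and identically equal to $1$ outside a compact set. A positive multiple of a contact form is again a contact form with the same kernel, so $\ker\alpha=\xist$ holds automatically, and property~(iii) holds as soon as $g\equiv1$ near infinity; everything then reduces to choosing the profile $g$ so that the Reeb dynamics of $g\alst$ is as required. Writing $g=g(z,u_1,\dots,u_n)$ with $u_j:=x_j^2+y_j^2$ and $N:=g+\sum_j u_j\,g_{u_j}$, one computes the Reeb vector field of $g\alst$ to be
\begin{align*}
 R &= \frac{N}{g^{2}}\,\partial_z -\frac{g_z}{2g^{2}}\sum_{j}\bigl(x_j\partial_{x_j}+y_j\partial_{y_j}\bigr)\\
   &\qquad -\frac{2}{g^{2}}\sum_{j}g_{u_j}\,\bigl(x_j\partial_{y_j}-y_j\partial_{x_j}\bigr).
\end{align*}

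Two structural features will drive the argument. From this formula $R\,u_j=-g_z\,u_j/g^{2}$, so the Reeb flow preserves all ratios $u_i:u_j$; and $R\bigl(g\sum_j u_j\bigr)=0$, so $H:=\bigl(\sum_j\rho_j^{2}\bigr)\,g$ is a first integral. Restricting to a joint level set of these integrals — fix a radial direction $\hat u$ with $\sum_j\hat u_j=1$, set $u_j=s\hat u_j$ and $G(z,s):=g(z,s\hat u)$ — the Reeb flow is the angular rotation $\dot\phi_j=-2g_{u_j}/g^{2}$ over a planar flow in $(z,s)$ that, after the positive time change by $1/G^{2}$, is the Hamiltonian flow of $H=sG$; here $\dot z=(G+sG_s)/G^{2}=N/g^{2}$.

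Now I would choose $g$ (with $g>0$, $g\equiv1$ off a compact set) so that: $N\ge0$ everywhere, with a single zero at a point $p_0=(z_0,u^0)$ having all $u_j^0>0$; $g_z(p_0)=0$; and $g_{u_1}(p_0):g_{u_2}(p_0)$ is irrational, both entries nonzero. Then: the torus $T^n=\{z=z_0,\ \rho_j=\sqrt{u_j^0}\}$ is Reeb-invariant, since on it $\dot z$ and every $\dot u_j$ vanish; this gives~(i). Since $\dot z=N/g^{2}\ge0$, a periodic orbit must have $z$ constant, hence $N$ vanishing along it, hence must lie on $T^n$, where the angular flow is linear of irrational slope and never closes; this gives~(iv). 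In the $(z,s)$-reduction along the ray through $u^0$, the unique critical point of $H$ is $p_0$, and since $N\ge0$ it is a degenerate, ``parabolic'' critical point ($H_{ss}=H_{zs}=0$ there, while $g_{zz}(p_0)>0$ can be arranged), so no closed reduced orbits lie near it, but its critical level set has a branch carrying an orbit that runs into $p_0$ as $t\to+\infty$ (hence is bounded in forward time, accumulating on $T^n$) and has $z\to-\infty$ as $t\to-\infty$ (where $g\equiv1$ and $\dot z\equiv1$); this gives~(ii). Property~(iii) holds by construction, and $R$ is bounded, so its flow is complete.

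The substantive point is the construction of such a $g$. Since $N|_{\mathrm{ray}}=\partial_s(sG)$, the profile is recovered from $N$ along radial lines by $g(z,s\hat u)=s^{-1}\int_0^{s}N(z,\sigma\hat u)\,d\sigma$, and $g\equiv1$ at infinity then forces $\int_0^\infty\!\bigl(N-1\bigr)\,d\sigma=0$ along every ray; so the non-negative ``well'' dug into $N$ at $p_0$ must be compensated by a gentle positive hump further out, keeping $N\ge0$ and creating no new zero — a soft interpolation — after first prescribing the finite jet of $g$ at $p_0$ to meet $g_z(p_0)=0$ and the slope condition. The use of a degenerate normal direction seems unavoidable, and is also the reason Theorem~\ref{thm:EH} holds in dimension three: a normally hyperbolic invariant torus would force $N$, hence $\dot z$, to change sign, so $\{N=0\}$ would be a closed hypersurface carrying a continuously varying angular slope $g_{u_1}:g_{u_2}$ that cannot avoid rational values; dodging this forces $\{N=0\}$ to meet some axis $\{\rho_j=0\}$, over which a closed Reeb orbit is unavoidable — whereas for $n\ge2$ the entire obstruction can be concentrated over a single torus of dimension $\ge2$, which admits an irrational linear flow. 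For $n=1$ the corresponding object is a circle, forcing a closed orbit.
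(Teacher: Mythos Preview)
Your approach is essentially the paper's: write $\alpha=g\,\alpha_{\mathrm{st}}$ for a positive torus-invariant $g$ (the paper uses $\alpha_{\mathrm{st}}/H$, so $g=1/H$), and arrange $\dot z=N/g^{2}\ge 0$ with equality only on a single torus carrying irrational linear flow. Your condition $N\ge 0$ with a unique zero is exactly the paper's condition~(H-iv), and the irrationality of $g_{u_1}(p_0):g_{u_2}(p_0)$ is the paper's condition~(H-i).

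Two genuine differences are worth recording. First, you discover and exploit the first integral $sg=\bigl(\sum_j r_j^{2}\bigr)g$, reducing to a planar Hamiltonian system and obtaining property~(ii) from the branch of the critical level set through $p_0$; the paper does not see this integral and instead imposes an extra condition~(H-ii) making a short cylinder over the torus forward-invariant. Your route is a little more conceptual and needs one condition fewer. Second, the paper actually \emph{writes down} an $H$ satisfying (H-i)--(H-iv) via three elementary cut-off steps, whereas your construction of $g$ remains a sketch (``prescribe $N$, invert along rays, compensate with a hump, soft interpolation''). This is where the paper does real work that you defer: the balancing condition $\int_0^\infty (N-1)\,d\sigma=0$ on \emph{every} ray, together with $N\ge 0$ having a single zero, $g\equiv 1$ off a compact set in all variables (including~$z$), and the slope constraint on the $g_{u_j}(p_0)$---which are \emph{angular} derivatives in $u$-space, not visible along a single ray---must all be satisfied simultaneously and smoothly at $u=0$. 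It can be done, but your paragraph does not yet do it; the paper's explicit formula for $H$ is precisely the missing piece. (Your side conditions $g_{zz}(p_0)>0$ and ``parabolic'' are not actually needed: once $N>0$ off $p_0$, the level curve $\{sG=c_0\}$ for $z<z_0$ is a graph $s=s(z)$ tending to $p_0$, and $\dot z>0$ on it forces convergence.)

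Your closing heuristic for why $n\ge 2$ is required is suggestive but should not be read as a proof; the $3$-dimensional statement is the Eliashberg--Hofer theorem, established by holomorphic-curve methods rather than by obstructing such profiles directly.
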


A related result in Riemannian geometry is due to
Bangert and the second author. In \cite{baro12}, answering a question
of Walter Craig, they showed the existence of a Riemannian
metric on $\R^n$, $n\geq 4$, equal to the Euclidean metric outside
a compact set, that admits bounded geodesics 
(or `trapped bicharacteristics') but no periodic ones.

A contact form with the Reeb dynamics described in
Theorem~\ref{thm:main} was first discovered by the second
author~\cite{roet13}. In joint work we derived the simple construction
of such an example that we are going to present now.
\section{Reeb and contact vector fields}
Let $(M,\xi=\ker\alpha)$ be a contact manifold.
A contact vector field is a vector field whose flow preserves
the contact structure~$\xi$. Once a contact form
$\alpha$ has been chosen, there is a one-to-one
correspondence between smooth functions $H\co  M\rightarrow\R$
and contact vector fields~$X$, defined as follows
(cf.~\cite[Theorem~2.3.1]{geig08}): Given $H$, the corresponding
contact vector field $X$ is given by $X=HR+Y$, where $R$
is the Reeb vector field of $\alpha$ and $Y$ is the
unique vector field tangent to $\xi$ satisfying
\begin{equation}
\label{eqn:Y}
i_Yd\alpha=dH(R)\alpha-dH.
\end{equation}
Conversely, the Hamiltonian function $H$ corresponding to
a contact vector field $X$ is given by $H=\alpha(X)$.

The Reeb vector field~$R$, corresponding to the constant function~$1$,
is a contact vector field whose flow even preserves the contact form~$\alpha$.
The following well-known lemma says that any contact vector field
positively transverse to $\xi$ is the Reeb vector field of some
contact form for~$\xi$. The proof is a straightforward
computation using the defining equations of the Reeb vector field.

\begin{lem}
\label{lem:Reeb}
The contact vector field corresponding to the
positive Hamiltonian function $H\co M\rightarrow\R^+$ is
the Reeb vector field of the contact form~$\alpha/H$.\qed
\end{lem}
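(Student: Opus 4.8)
The plan is to verify directly that the contact vector field $X$ associated with a positive Hamiltonian $H$ satisfies the two defining equations of the Reeb vector field for the rescaled form $\alpha/H$. Recall from the discussion above that $X = HR + Y$, where $Y$ is tangent to $\xi$ and determined by \eqref{eqn:Y}. First I would compute $(\alpha/H)(X)$: since $Y\in\ker\alpha$ and $\alpha(R)=1$, this equals $\frac{1}{H}\cdot H\cdot\alpha(R) = 1$. So the normalization condition $(\alpha/H)(X)\equiv 1$ holds, and in particular $X$ is positively transverse to $\xi$, which is exactly where positivity of $H$ is needed — so that $\alpha/H$ is again a contact form for $\xi$ with the correct coorientation.

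The second step is the condition $i_X\, d(\alpha/H) \equiv 0$. Here I would expand $d(\alpha/H) = \frac{1}{H}\, d\alpha - \frac{1}{H^2}\, dH\wedge\alpha$ and plug in $X = HR + Y$. Using $i_R\, d\alpha = 0$, the first term contributes $\frac{1}{H}\, i_Y\, d\alpha$. For the second term, $i_X(dH\wedge\alpha) = dH(X)\,\alpha - \alpha(X)\,dH = dH(X)\,\alpha - H\,dH$, and since $dH(X) = dH(HR+Y) = H\,dH(R) + dH(Y)$, a short computation collects everything into
\[
i_X\, d\!\left(\frac{\alpha}{H}\right) = \frac{1}{H}\Bigl(i_Y\, d\alpha + dH - dH(R)\,\alpha\Bigr) + \frac{1}{H^2}\, dH(Y)\,\alpha .
\]
By \eqref{eqn:Y} the bracketed expression vanishes identically, and evaluating the remaining $\frac{1}{H^2}dH(Y)\,\alpha$ on $Y$ shows $dH(Y) = -i_Y\, d\alpha(Y)$... more cleanly: contracting \eqref{eqn:Y} with $Y$ gives $0 = dH(R)\alpha(Y) - dH(Y) = -dH(Y)$ since $\alpha(Y)=0$, so $dH(Y)=0$ and the last term drops out. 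Hence $i_X\, d(\alpha/H)\equiv 0$.

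Having established both defining equations, uniqueness of the Reeb vector field forces $X$ to be the Reeb vector field of $\alpha/H$, completing the proof. I do not expect any genuine obstacle here: the only subtlety is keeping track of the Leibniz-rule terms when differentiating $\alpha/H$ and remembering to use $dH(Y)=0$, which itself follows from pairing the defining relation \eqref{eqn:Y} for $Y$ against $Y$. Positivity of $H$ enters only to guarantee that $\alpha/H$ is a bona fide contact form inducing the same cooriented contact structure $\xi$; the algebraic identities themselves hold wherever $H\neq 0$.
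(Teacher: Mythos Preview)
Your argument is correct and is precisely the ``straightforward computation using the defining equations of the Reeb vector field'' that the paper leaves to the reader; the paper gives no further details beyond that phrase. One harmless slip: the sign on the $\frac{1}{H^2}\,dH(Y)\,\alpha$ term should be negative rather than positive, but since you immediately show $dH(Y)=0$ by pairing \eqref{eqn:Y} with $Y$, this has no effect on the conclusion.
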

\section{The example}
We are going to prove Theorem~\ref{thm:main} for $n=2$;
the higher-dimensional generalisation is straightforward.
Thus, $\alst$ now denotes the standard contact form on $\R^5$,
with Reeb vector field $\Rst=\partial_z$. Write $(r_j,\theta_j)$
for the polar coordinates in the $(x_j,y_j)$-plane,
$j=1,2$. By Lemma~\ref{lem:Reeb} it suffices
to construct a contact vector field positively transverse to~$\xist$
with the desired dynamics.

\begin{prop}
\label{prop:X}
There is a contact vector field $X$ for $\xist$ with the
following properties:
\begin{enumerate}
\item[(X-i)] On the Clifford torus
\[ T:=\{r_1=1,\, r_2=1,\, z=0\} \]
the vector field $X$ equals $\partial_{\theta_1}+s\partial_{\theta_2}$
for some $s\in [0,1]\setminus\Q$.
\item[(X-ii)] The cylinder $T\times [-1,0]$, i.e.\
\[ \{ r_1=1,\, r_2=1,\, z\in[-1,0]\},\]
is mapped to itself under the flow of $X$ in forward time.
\item[(X-iii)] Outside a compact neighbourhood of~$T$, the vector field
$X$ equals~$\partial_z$.
\item[(X-iv)] On $\R^5\setminus T$ we have $dz(X)>0$.
\end{enumerate}
\end{prop}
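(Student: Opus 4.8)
The plan is to use the correspondence between functions and contact vector fields from Section~2: I realise $X$ as $X=X_H$, the contact vector field of a function $H\co\R^5\rightarrow\R$, positivity of $H$ guaranteeing positive transversality. I look for $H$ invariant under the rotations $\partial_{\theta_1},\partial_{\theta_2}$, i.e.\ $H=H(\rho_1,\rho_2,z)$ with $\rho_j:=r_j^2/2$; then $\alst=dz+\rho_1\,d\theta_1+\rho_2\,d\theta_2$, $\Rst=\partial_z$, and \eqref{eqn:Y} gives after a short computation
\[
X_H=\partial_{\rho_1}H\,\partial_{\theta_1}+\partial_{\rho_2}H\,\partial_{\theta_2}
+\partial_zH\,\bigl(\rho_1\partial_{\rho_1}+\rho_2\partial_{\rho_2}\bigr)
+\bigl(H-\rho_1\partial_{\rho_1}H-\rho_2\partial_{\rho_2}H\bigr)\,\partial_z .
\]
Reading this off, (X-iii) becomes $H\equiv1$ outside a compact set; the cylinder $\{\rho_1=\rho_2=1/2\}$ is $X_H$-invariant as soon as $\partial_zH$ vanishes on it; $dz(X_H)=H-\rho_1\partial_{\rho_1}H-\rho_2\partial_{\rho_2}H$, which is condition (X-iv) off~$T$; and $X_H=\partial_{\theta_1}+s\partial_{\theta_2}$ along~$T$ exactly when $\partial_{\rho_1}H=1$, $\partial_{\rho_2}H=s$, $\partial_zH=0$ and $H=(1+s)/2$ hold at $(\rho_1,\rho_2,z)=(1/2,1/2,0)$, which is (X-i).

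This last, jet-type condition forces $H=\rho_1+s\rho_2+\widetilde H$ near $T$ with $\widetilde H$ vanishing to second order at~$T$. On a neighbourhood of $T$ together with the cylinder segment $\{\rho_1=\rho_2=1/2,\ z\in[-1,0]\}$ I would take
\[
\widetilde H=\tfrac{\alpha}{2}(\rho_1-\rho_2)^2-c\,(\rho_1+\rho_2-1)^3-\tfrac{\mu}{2}\,z^2(\rho_1+\rho_2-1),
\qquad\alpha<0,\ c>0,\ \mu>0 .
\]
Then $H$ is constant equal to $(1+s)/2$ along the cylinder $\{\rho_1=\rho_2=1/2\}$, so $\partial_zH$ vanishes there, and a direct computation gives
\[
dz(X_H)=-\tfrac{\alpha}{2}(\rho_1-\rho_2)^2+c\,(\rho_1+\rho_2-1)^2\bigl(1+2(\rho_1+\rho_2)\bigr)+\tfrac{\mu}{2}\,z^2 .
\]
Since $\rho_1,\rho_2\geq0$, every term on the right is nonnegative, so $dz(X_H)\geq0$ with equality exactly on $T$. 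Along the cylinder the expression reduces to $\tfrac{\mu}{2}z^2$, so there $\dot z=\tfrac{\mu}{2}z^2\geq0$, and a point of the segment $z\in[-1,0]$ moves with $z$ increasing monotonically towards~$0$; hence the segment is forward invariant, which is (X-ii).

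It remains to extend $H$ to all of $\R^5$ with $H\equiv1$ outside a compact set, which is (X-iii). I would multiply the correction $\rho_1+s\rho_2+\widetilde H-1$ by a cut-off $\chi=\chi(\rho_1,\rho_2,z)$ that is $\equiv1$ near $T$ and near the cylinder segment --- so that the jet condition at $T$, the vanishing of $\partial_zH$ along the segment, and the formula just displayed all survive there --- and that is supported in a compact set. Since $H\mapsto H-\rho_1\partial_{\rho_1}H-\rho_2\partial_{\rho_2}H$ is a first-order operator, $dz(X_H)$ then acquires an extra term $-(\rho_1+s\rho_2+\widetilde H-1)\,(\rho_1\partial_{\rho_1}\chi+\rho_2\partial_{\rho_2}\chi)$, supported in the transition region, while $H$ becomes a convex combination of $1$ and the model $\rho_1+s\rho_2+\widetilde H$; choosing the transition appropriately (away from~$T$, with the model kept positive there, and, near the cylinder, taking $\chi$ independent of $z$) one keeps $dz(X_H)>0$ off $T$ and $H>0$. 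Setting $X:=X_H$ completes the construction.

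I expect the main obstacle to be the compatibility of (X-i) with (X-iv). Any rotation-invariant $H$ realising $\partial_{\theta_1}+s\partial_{\theta_2}$ on $T$ must agree there to first order with $\rho_1+s\rho_2$, which is homogeneous of degree one in $(\rho_1,\rho_2)$; hence $dz(X_H)$ is forced to vanish to second order at $T$, with Hessian necessarily degenerate along the scaling direction $\partial_{\rho_1}+\partial_{\rho_2}$, so quadratic corrections to $H$ cannot make $dz(X_H)$ strictly positive near $T$. It is the cubic term $-c(\rho_1+\rho_2-1)^3$ that repairs this degenerate direction (its sign being dictated by $\rho_1+\rho_2\geq0$), while the term $-\tfrac{\mu}{2}z^2(\rho_1+\rho_2-1)$ settles the $z$-direction along the cylinder without spoiling the flatness of $H$ in $z$ that makes the cylinder invariant --- here one uses that on the cylinder $dz(X_H)$ equals the transverse derivative $-\tfrac12(\partial_{\rho_1}+\partial_{\rho_2})H$, which may still depend on $z$. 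The remaining task, controlling $dz(X_H)$ and $H$ through the cut-off, is then a purely technical matter.
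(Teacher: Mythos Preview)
Your reduction to a rotation-invariant Hamiltonian $H=H(\rho_1,\rho_2,z)$ and the formula
\[
X_H=H_{\rho_1}\partial_{\theta_1}+H_{\rho_2}\partial_{\theta_2}
+H_z(\rho_1\partial_{\rho_1}+\rho_2\partial_{\rho_2})
+\bigl(H-\rho_1 H_{\rho_1}-\rho_2 H_{\rho_2}\bigr)\partial_z
\]
coincide with the paper's set-up. Your polynomial model near~$T$ is correct: the computation
\[
dz(X_H)=-\tfrac{\alpha}{2}(\rho_1-\rho_2)^2+c(\rho_1+\rho_2-1)^2\bigl(1+2(\rho_1+\rho_2)\bigr)+\tfrac{\mu}{2}z^2
\]
checks out, and your diagnosis of why a cubic in $\rho_1+\rho_2-1$ is needed (the Hessian of $dz(X_H)$ at~$T$ is forced to degenerate in the scaling direction if one uses only quadratic corrections) is sharp.

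The gap is the passage from local model to global~$H$. Writing $H=1+\chi\,(H_{\mathrm{model}}-1)$ gives, as you say,
\[
dz(X_H)=(1-\chi)+\chi\,dz(X_{H_{\mathrm{model}}})-(H_{\mathrm{model}}-1)(\rho_1\chi_{\rho_1}+\rho_2\chi_{\rho_2}),
\]
but the last term is not controlled. On the transition shell $H_{\mathrm{model}}-1\approx (s-1)/2$ is of order one, whereas the convex combination is only $\geq\min\bigl(1,\,dz(X_{H_{\mathrm{model}}})\bigr)$, and the latter is of order $\delta^2$ at distance~$\delta$ from~$T$. Pushing the transition out to make $dz(X_{H_{\mathrm{model}}})$ large runs into the cubic $-c(\rho_1+\rho_2-1)^3$, which drives $H_{\mathrm{model}}$ negative and kills $H>0$; shrinking $c,|\alpha|,\mu$ to postpone this shrinks $dz(X_{H_{\mathrm{model}}})$ in the same proportion. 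A genuine estimate is required here, and ``a purely technical matter'' undersells it.

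The paper avoids this competition altogether by a different global construction. Rather than cutting off a model, it builds $H$ through compositions that preserve (H-iv) structurally: first $H_0=\tfrac12\exp f_z(r_1^2)+\tfrac{s}{2}\exp f_z(r_2^2)$ with $t f_z'(t)\leq1$, so that each summand satisfies $\rho_j\partial_{\rho_j}(\cdot)\leq(\cdot)$; then $H_1=\exp(g\circ H_0)$ with $t g'(t)\leq1$, which flattens $H_0$ to a constant in the $(\rho_1,\rho_2)$-directions while the sub-logarithmic condition propagates the inequality $H-\rho_1 H_{\rho_1}-\rho_2 H_{\rho_2}\geq0$; only then is the $z$-cut-off applied, which is harmless because $\partial_z$ does not enter~(H-iv). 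Your approach could be repaired by performing a $(\rho_1,\rho_2)$-flattening of this type \emph{before} any cut-off in~$z$.
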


Condition (X-i) guarantees that the Clifford torus $T$ is an
invariant set of $X$ without any closed orbits. Then by condition
(X-iv) there are no closed orbits whatsoever. Condition (X-iii)
ensures that the contact form with Reeb vector field $X$ is
the standard form $\alst$ outside a compact neighbourhood of~$T$.
With condition (X-ii) this yields an orbit coming from $-\infty$
and trapped in forward time, since $T$ is attracting for the
whole cylinder $T\times [-1,0]$. Likewise, our construction will
yield orbits trapped in backward time and going off to~$\infty$.

\begin{proof}[Proof of Proposition~\ref{prop:X}]
We wish to construct $X$ as the contact vector field
corresponding to a Hamiltonian function $H\co\R^5\rightarrow\R^+$.
To that end, we translate the conditions on~$X$ into conditions
on~$H$.

With $dH(\Rst)=H_z$, equation (\ref{eqn:Y}) for $\alpha=\alst$
becomes
\begin{equation}
\label{eqn:Yda}
i_Yd\alst=\sum_{j=1}^2\left(-\bigl(\frac{y_j}{2}H_z+H_{x_j}\bigr)\, dx_j+
\bigl(\frac{x_j}{2}H_z-H_{y_j}\bigr)\, dy_j\right).
\end{equation}
The contact structure $\xist$ is spanned by the vector fields
\[ e_j=\partial_{x_j}+\frac{y_j}{2}\partial_z,\;\;\;
f_j=\partial_{y_j}-\frac{x_j}{2}\partial_z,\;\;\; j=1,2.\]
By writing $Y$ in terms of these vector fields, we find with
equation~(\ref{eqn:Yda}) that
\begin{equation}
\label{eqn:Yst}
Y=\sum_{j=1}^2\left(\bigl(\frac{x_j}{2}H_z-H_{y_j}\bigr)\, e_j
+\bigl(\frac{y_j}{2}H_z+H_{x_j}\bigr)\, f_j\right).
\end{equation}

Condition (X-i) says that along $T$ we must have
\[ H=\alst(\partial_{\theta_1}+s\partial_{\theta_2})=\frac{1+s}{2}\]
and
\[ Y=X-H\Rst =
\partial_{\theta_1}+s\partial_{\theta_2}-\frac{1+s}{2}\partial_z.\]
With (\ref{eqn:Yst}) this gives
\[ \left. \begin{array}{rcl}
H_{x_1} & = & x_1-\frac{y_1}{2}H_z\\[1mm]
H_{y_1} & = & y_1+\frac{x_1}{2}H_z\\[1mm]
H_{x_2} & = & sx_2-\frac{y_2}{2}H_z\\[1mm]
H_{y_2} & = & sy_2+\frac{x_2}{2}H_z
\end{array}\right\}\text{on $T$.}\]
But on $T$ we also have
\[ 0=dH(\partial_{\theta_j})=x_jH_{y_j}-y_jH_{x_j},\]
which by the previous equations equals $H_z/2$. So in fact we obtain
\begin{equation}
\tag{H-i}
\left. \begin{array}{lcl}
H       & = & (1+s)/2\\
H_{x_1} & = & x_1\\
H_{y_1} & = & y_1\\
H_{x_2} & = & sx_2\\
H_{y_2} & = & sy_2\\
H_z     & = & 0
\end{array}\right\}\text{on $T$.}
\end{equation}

Next we turn to condition (X-ii). For the moment we may disregard the
$\partial_z$-component of $X$, as this will be controlled by the
condition on $H$ corresponding to (X-iv). By looking at
equation~(\ref{eqn:Yst})
we see that $X$ will have the required behaviour (and the similar one for
the flow on $T\times [0,1]$ in backward time) if we stipulate
\begin{equation}
\tag{H-ii}
\text{$H=(1+s)/2$ on the cylinder $\{ r_1=1,\, r_2=1,\, z\in[-1,1]\}$.}
\end{equation}
Indeed, then $H_z=0$ on that cylinder, and
\[ 0=H_{\theta_j}=x_jH_{y_j}-y_jH_{x_j},\; j=1,2,\]
which implies that $H_{x_j}\partial_{y_j}-H_{y_j}\partial_{x_j}$
is proportional to $x_j\partial_{y_j}-y_j\partial_{x_j}=\partial_{\theta_j}$
on that cylinder.

Condition (X-iii) simply translates into
\begin{equation}
\tag{H-iii}
\text{$H\equiv 1$ outside a compact neighbourhood of $T$.}
\end{equation}

Finally, from (\ref{eqn:Yst}) we find that
\[ dz(Y)=-\frac{1}{2}\sum_{j=1}^2\left(x_jH_{x_j}+y_jH_{y_j}\right),\]
so condition (X-iv) is equivalent to
\begin{equation}
\tag{H-iv}
H-\frac{1}{2}\sum_{j=1}^2\left(x_jH_{x_j}+y_jH_{y_j}\right)>0\;\;
\text{on $\R^5\setminus T$.}
\end{equation}

We now proceed to construct an explicit function $H$ satisfying
properties (H-i) to (H-iv). The basic idea is very simple.
We modify the function
\[ (x_1,y_1,x_2,y_2,z)\longmapsto \frac{1}{2}(x_1^2+y_1^2)+
\frac{s}{2}(x_2^2+y_2^2),\]
which satisfies (H-i), such that conditions
(H-ii) to (H-iv) are also satisfied. This essentially
amounts to smoothing out this function in such a way that it
becomes constant 1 outside a compact neighbourhood
of $T$, and such that it has a growth rate in radial direction
in the planes $\{z=\text{const.}\}$ smaller than the
quadratic growth rate of the function we start with.

Let $f_z\co\R_0^+\rightarrow\R$,
$z\in\R$, be a smooth family of smooth functions
with the following properties:
\begin{enumerate}
\item[(i)] $f_z(1)=0$ for all~$z$;
\item[(ii)] $tf_z'(t)\leq 1$ for all $z$ and $t$, with equality
only for $z=0$ and $t=1$;
\item[(iii)] for $t$ large (uniformly in~$z$),
$f_z(t)>\log c$ for some constant $c>2/s>2$
\end{enumerate}
In other words, $f_z$ has the same value as $\log$
at $t=1$, $f_0$ has the same derivative at $t=1$ as $\log$, 
for other values of $z$ or $t$ the function $f_z$
grows more slowly than $\log$. The function
\[ H_0(x_1,y_1,x_2,y_2,z):=
\frac{1}{2}\exp \bigl(f_z(x_1^2+y_1^2)\bigr)+
\frac{s}{2}\exp \bigl(f_z(x_2^2+y_2^2)\bigr)\]
satisfies (H-i) and (H-iv), and it satisfies
(H-ii) on the whole cylinder (in $z$-direction) over~$T$.

Notice that by condition (iii) on $f_z$, either of the
summands in $H_0$ is greater than $sc/2>1$ for $r_1$ resp.\ $r_2$
sufficiently large. This will be used below when we enforce
condition (H-iii).

Let $g\co\R^+\rightarrow\R$
be a smooth monotone increasing function with these properties:
\begin{enumerate}
\item[(i)] $g(t)=\log t$ near $t=(1+s)/2$;
\item[(ii)] $g(t)=0$ for $t\geq sc/2$;
\item[(iii)] $g'(t)\leq 1/t$ for all $t$.
\end{enumerate}
Then $H_1:=\exp(g\circ H_0)$ satisfies all requirements
bar one: (H-iii) only holds outside a cylinder over a
compact neighbourhood of $T$ in $\{ z=0\}$.

Finally, we choose a smooth function $h\co\R\rightarrow [0,1]$ with
\begin{enumerate}
\item[(i)] $h(z)=0$ for $z\in [-1,1]$;
\item[(ii)] $h(z)=1$ for $|z|$ large.
\end{enumerate}
Then set
\[ H(x_1,y_1,x_2,y_2,z)=(1-h(z))\cdot H_1(x_1,y_1,x_2,y_2,z)+h(z).\]
This positive function $H$ satisfies conditions (H-i) to (H-iv).
\end{proof}

\begin{rem}
Statement (ii) in Theorem~\ref{thm:main} is a topological consequence of
statements (i) and~(iii): Consider a hyperplane $E=\{ z=-z_0\}$ with
$z_0>0$ sufficiently large, such that $R=\partial_z$ along~$E$.
The flow of $R$ (for any given finite time) cannot send $E$
to the region $\{z>0\}$, since this is obstructed by the invariant
torus~$T$. Our proof, in addition, gives explicit orbits trapped
in one direction of time only.
\end{rem}
\begin{ack}
We thank Victor Bangert for directing our attention to this question.
This note was written during the workshop on Legendrian submanifolds,
holomorphic curves and generating families at the Acad\'emie Royale
de Belgique, August 2013, organised by Fr\'ed\'eric Bourgeois.
H.~G.\ and K.~Z.\ are partially supported by DFG grants GE 1245/2-1
and ZE 992/1-1, respectively.
\end{ack}

\end{document}